\theoremstyle{plain}
\newtheorem{theorem}{Theorem}[section]
\newtheorem{proposition}[theorem]{Proposition}
\newtheorem{corollary}[theorem]{Corollary}
\theoremstyle{definition}
\newtheorem{definition}[theorem]{Definition}
\newcommand{\la}{ T(}
\newcommand{\ra}{)}
\begin{document}
\def\sect#1{\section*{\leftline{\large\bf #1}}}
\def\th#1{\noindent{\bf #1}\bgroup\it}
\def\endth{\egroup\par}

\title[Vector Semi-Inner Products]{
Vector Semi-Inner Products}
\author{K. Rose}
\address{Department of Mathematics, Lyon College, Batesville,
AR 72501, USA}
\email{kjrose017@gmail.com}
\author{C. Schwanke}
\address{Department of Mathematics and Applied Mathematics, University of Pretoria, Private Bag X20, Hatfield 0028, South Africa and Unit for BMI, North-West University, Private Bag X6001, Potchefstroom, 2520, South Africa}
\email{cmschwanke26@gmail.com}
\author{Z. Ward}
\address{Department of Mathematics, Lyon College, Batesville,
	AR 72501, USA}
\email{zdward904@gmail.com}
\date{\today}
\subjclass[2020]{46A40}
\keywords{vector lattice, semi-inner product, Pythagorean theorem, parallelogram law}

\begin{abstract}
We formalize the notion of vector semi-inner products and introduce a class of vector seminorms which are built from these maps. The classical Pythagorean theorem and parallelogram law are then generalized to vector seminorms that have a geometric mean closed vector lattice for codomain. In the special case that this codomain is a square root closed, semiprime $f$-algebra, we provide a sharpening of the triangle inequality as well as a condition for equality.
\end{abstract}

\maketitle

\section{Introduction}\label{S:intro}

Though not formalized previously, semi-inner products with a vector lattice codomain have been proven to satisfy the Cauchy-Schwarz inequality in certain settings. Specifically, it was proven in \cite[Corollary 4]{BusvR4} that the Cauchy-Schwarz inequality holds for semi-inner products with an almost $f$-algebra codomain. This result inspired the paper \cite{BusSch3}, whose main theorem illustrates that the Cauchy-Schwarz inequality also holds for semi-inner products whose codomain is a geometric mean closed vector lattice.

These results promised the development of a vector lattice-valued semi-inner product space theory, which is the focus of this paper. Using these generalized semi-inner products, which we call \textit{vector semi-inner products}, we construct vector seminorms, as studied in \cite{BusDev}. We then elucidate how a large class of these vector seminorms satisfy an inequality that is sharper than the triangle inequality. From this result, a new equality condition for the triangle inequality is established. Finally, we tap into the theory of square mean closedness in vector lattices, which was developed in \cite{Az,AzBoBus,BusSch,dS}, to prove that a wide range of vector seminorms built from vector semi-inner products satisfy the Pythagoran theorem and the parallelogram law. We proceed with the preliminaries.

\section{Preliminaries}\label{S:prelims}

We refer the reader to \cite{AB,LuxZan1,Zan2} for basic terminology and theory for vector lattices (also called Riesz spaces) and $f$-algebras. Briefly, a vector lattice is an ordered vector space over $\mathbb{R}$ which is closed under finite suprema and infima. Given a vector lattice $F$, we as usual denote the positive cone by
\[
F^+:=\{x\in F\ :\ x\geq 0\}.
\]
All vector lattices in this paper are assumed to be Archimedean, meaning that
\[
\inf\{n^{-1}u\ :\ n\in\mathbb{N}\}=0
\]
holds for all elements $u$ of the positive cone. An $f$-algebra $F$ is a vector lattice equipped with a ring multiplication (which we denote by juxtaposition as usual) for which
\begin{itemize}
    \item[(i)] $xy\in F^+$ for every $x,y\in F^+$, and 
    \item[(ii)] $\inf\{x,y\}=0$ implies $\inf\{(xz),y\}=\inf\{x,(yz)\}=0$ for all $z\in F^+$.
\end{itemize}
We add that the multiplication on any Archimedean $f$-algebra is commutative \cite[Theorem~140.10]{Zan2}.

A vector lattice $F$ is said to be \textit{geometric mean closed} (see \cite[page 486]{AzBoBus}) if
\[
\inf\{\theta u+\theta^{-1}v:\theta\in(0,\infty)\}
\]
exists in $F$ for every $u,v\in F^{+}$. Given a geometric mean closed vector lattice $F$ and $u,v\in F^+$ we define
\[
u\boxtimes v:=2^{-1}\inf\{\theta u+\theta^{-1}v:\theta\in(0,\infty)\}.
\]

It will benefit the reader to note that for $u,v\in\mathbb{R}^+$, the expression $u\boxtimes v$ coincides with the classical geometric mean:
\[
u\boxtimes v=\sqrt{uv}\quad (u,v\in\mathbb{R}^+).
\]

Every geometric mean closed vector lattice is also square mean closed \cite[Theorem~4.4]{AzBoBus}. We say a vector lattice $F$ is \textit{square mean closed} (see \cite[page 482]{AzBoBus}) if
\[
\sup\{ (\cos\theta)u+(\sin\theta)v:\theta\in[0,2\pi]\}
\]
exists in $F$ for every $u,v\in F$. In this case we define
\[
u\boxplus v=\sup\{ (\cos\theta)u+(\sin\theta)v:\theta\in[0,2\pi]\}\quad (u,v\in F).
\]

To again aid the reader, we note that
\[
u\boxplus v=\sqrt{u^2+v^2}
\]
holds for all $u,v\in\mathbb{R}$.

In the special case that $u$ and $v$ are positive, we have (see \cite[Theorem 91.4(ii)]{Zan2})
\[
u\boxplus v=\sup\{ (\cos\theta)u+(\sin\theta)v:\theta\in[0,2^{-1}\pi]\}\quad (u,v\in F^+).
\]

Given an $f$-algebra $F$ and $a\in F$, we again as customary write $a^2=aa$. For $a\in F^+$, if there exists a unique $b\in F^{+}$ for which $b^{2}=a$, we write $b=\sqrt{a}$. If $F$ is a \textit{semiprime} $f$-algebra (meaning free of nilpotents) and $a,b\in F^{+}$ satisfy $b^{2}=a$ then we have $b=\sqrt{a}$ by \cite[Proposition 2(ii)]{BeuHui}.

Every vector space in this document is assumed to be over $\mathbb{R}$.

\begin{definition}\label{D:SIP}
        Let $V$ be a vector space, and let $F$ be an ordered vector space. We call a map $T\colon V\times V \to F$ a \textit{vector semi-inner product} if 
        \begin{itemize}
            \item [(i)] $T(x+y, z)=T(x,z)+T(y,z)\quad (x, y, z \in V),$
            \item [(ii)] $T(x,y + z) = T(x, y) + T(x, z)\quad (x, y, z \in V),$
            \item [(iii)] $\lambda T(x, y) = T(\lambda x, y) = T(x, \lambda y)\quad (x, y \in V,\ \lambda \in \mathbb{R}),$
            \item [(iv)] $T(x, y) = T(y, x)\quad (x, y \in V), $ and 
            \item [(v)] $T(x, x) \ge 0 \quad (x\in V).$
        \end{itemize}
    \end{definition}

The following theorem regarding vector semi-inner products is the content of \cite[Theorem 3.1]{BusSch3} and is an essential ingredient to the results in Section 3.

\begin{theorem}\label{T:CSI} \textnormal{\textbf{(Cauchy-Schwarz Inequality)}}
Let $V$ be a vector space, and suppose that $F$ is a geometric mean closed vector lattice. If $T\colon V\times V\rightarrow F$ is a vector semi-inner product, then
\begin{itemize}
	\item[(1)] $\underset{\lambda\in\mathbb{R}\setminus\{ 0\}}{\inf}\{|\lambda|^{-1}T(\lambda x-y,\lambda x-y)\}$ exists in $F\quad (x,y\in V)$,
	\item[(2)] $|T(x,y)|=T(x,x)\boxtimes T(y,y)-2^{-1}\underset{\lambda\in\mathbb{R}\setminus\{0\}}{\inf}\{|\lambda|^{-1}T(\lambda x-y,\lambda x-y)\}\quad (x,y\in V)$,
	\item[(3)] $|T(x,y)|\leq T(x,x)\boxtimes T(y,y)\quad (x,y\in V)$, and
	\item[(4)] $|T(x,y)|=T(x,x)\boxtimes T(y,y)$ if and only if $\underset{\lambda\in\mathbb{R}\setminus\{ 0\}}{\inf}\{|\lambda|^{-1}T(\lambda x-y,\lambda x-y)\}=0$.
\end{itemize}
\end{theorem}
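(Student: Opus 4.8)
The plan is to lift the classical single-variable Cauchy--Schwarz argument essentially verbatim, letting the hypothesis of geometric mean closedness supply the one infimum that does not come for free in a bare lattice. Fix $x,y\in V$ and write $a:=T(x,x)$, $b:=T(y,y)$, $c:=T(x,y)$, so that $a,b\in F^{+}$ by axiom (v). First I would expand, using bilinearity (i)--(ii), symmetry (iv) and homogeneity (iii), the quantity $f(\lambda):=|\lambda|^{-1}T(\lambda x-y,\lambda x-y)$ for $\lambda\neq 0$. This gives $f(\lambda)=|\lambda|\,a+|\lambda|^{-1}b-2\,\mathrm{sgn}(\lambda)\,c$; in particular $f(\lambda)=\theta a+\theta^{-1}b-2c$ when $\lambda=\theta>0$, and $f(\lambda)=\theta a+\theta^{-1}b+2c$ when $\lambda=-\theta<0$.

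Second, I would split the infimum over $\mathbb{R}\setminus\{0\}$ into the infima over the two open half-lines. By definition of geometric mean closedness, $\inf\{\theta a+\theta^{-1}b:\theta\in(0,\infty)\}$ exists in $F$ and equals $2(a\boxtimes b)$; since translating by a fixed element of $F$ is an order automorphism, this yields $\inf_{\lambda>0}f(\lambda)=2(a\boxtimes b)-2c$ and $\inf_{\lambda<0}f(\lambda)=2(a\boxtimes b)+2c$, both existing in $F$. Then I would appeal to the elementary lattice fact that $\inf(A\cup B)$ exists and equals $\inf A\wedge\inf B$ whenever $\inf A$ and $\inf B$ exist, to conclude that $\inf_{\lambda\in\mathbb{R}\setminus\{0\}}f(\lambda)$ exists --- this is (1) --- and, via $\inf(p-q,\,p+q)=p-|q|$, that it equals $2(a\boxtimes b)-2|c|$. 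Rearranging is precisely (2).

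Finally, (3) and (4) fall out of (2). Each $T(\lambda x-y,\lambda x-y)$ lies in $F^{+}$ by (v) and $|\lambda|^{-1}>0$, so every $f(\lambda)\geq 0$ and hence $\inf_{\lambda}f(\lambda)\geq 0$; plugging this into (2) gives $|T(x,y)|\leq T(x,x)\boxtimes T(y,y)$, and equality holds iff the nonnegative summand $2^{-1}\inf_{\lambda}f(\lambda)$ is $0$. I do not anticipate a genuine obstacle here: the only places needing care are the algebraic expansion of $f$ (where symmetry must be used to merge the two cross terms) and the two small order-theoretic facts above; the real input --- existence of the geometric-mean infimum --- is handed to us by the hypothesis on $F$.
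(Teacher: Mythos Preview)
The paper does not actually prove this theorem: it is quoted verbatim as \cite[Theorem~3.1]{BusSch3} and used as a black box, so there is no in-paper argument to compare against. Your proposed proof is correct and is essentially the standard one that the cited reference gives: expand $|\lambda|^{-1}T(\lambda x-y,\lambda x-y)=|\lambda|\,T(x,x)+|\lambda|^{-1}T(y,y)-2\,\mathrm{sgn}(\lambda)\,T(x,y)$, invoke geometric mean closedness to take the infimum over each half-line, combine via $(p-q)\wedge(p+q)=p-|q|$, and read off (1)--(4).
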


In Section 3 we also utilize the following proposition by Azouzi, Boulabiar, and Buskes.

\begin{proposition}\cite[Lemma~5.1]{AzBoBus}\label{P:AzBoBus}
Let $F$ be a geometric mean closed vector lattice, put $\lambda\in\mathbb{R}^+$, and let $a,b,c\subseteq F^+$. The following hold:
\begin{itemize}
	\item[(1)] (Biadditvity) $(a+b)\boxtimes c=(a\boxtimes c)\boxplus(b\boxtimes c)$\quad and\quad $a\boxtimes(b+c)=(a\boxtimes b)\boxplus(a\boxtimes c)$;	\item[(2)] (Separate Positive Homogeneity) $(\lambda a)\boxtimes b =\lambda^{1/2}(a\boxtimes b)=a\boxtimes(\lambda b)$. 
\end{itemize}
\end{proposition}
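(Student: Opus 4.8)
The plan is to dispatch part (2) and two symmetry reductions by elementary substitutions, and then to reduce the first identity in (1) to the scalar identity $\sqrt{(a+b)c}=\sqrt{ac+bc}$ by passing to the Dedekind completion of $F$, where the Krivine functional calculus for continuous positively homogeneous functions is available.

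First I would record two symmetries. Substituting $\theta\mapsto\theta^{-1}$ in the infimum defining $\boxtimes$ gives $a\boxtimes b=b\boxtimes a$ for all $a,b\in F^{+}$, and substituting $\phi\mapsto 2^{-1}\pi-\phi$ in the positive-element supremum formula for $\boxplus$ gives $a\boxplus b=b\boxplus a$ for all $a,b\in F^{+}$. Since $a\boxtimes c$ and $b\boxtimes c$ lie in $F^{+}$, these reduce the second identity in (1) to the first and the second equality in (2) to the first. For (2) with $\lambda>0$, reindexing the infimum by $\sigma=\lambda^{1/2}\theta$ rewrites $\theta\lambda a+\theta^{-1}b$ as $\lambda^{1/2}(\sigma a+\sigma^{-1}b)$, and since positive scalar multiplication commutes with infima in a vector lattice this gives $(\lambda a)\boxtimes b=\lambda^{1/2}(a\boxtimes b)$; the case $\lambda=0$ is immediate, as the Archimedean property forces $\inf\{\theta^{-1}b:\theta\in(0,\infty)\}=0$.

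For the first identity in (1), I would pass to the Dedekind completion $F^{\delta}$ of $F$, into which $F$ embeds as an order-dense sublattice; since an order-dense sublattice preserves all existing suprema and infima, the elements $a\boxtimes c$, $b\boxtimes c$, $(a+b)\boxtimes c$, and $(a\boxtimes c)\boxplus(b\boxtimes c)$ are unchanged on passing to $F^{\delta}$. Being Dedekind complete, $F^{\delta}$ is uniformly complete and therefore carries the Krivine functional calculus for continuous positively homogeneous functions of finitely many real variables; in particular $F^{\delta}$ is geometric mean closed and square mean closed, and there $\boxtimes$ coincides with the functional-calculus expression $\sqrt{u^{+}v^{+}}$ while $\boxplus$ coincides with $\sqrt{(u^{+})^{2}+(v^{+})^{2}}$ on the positive cone (a standard fact). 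Since $a,b,c\in F^{+}$ we may write $a+b=a^{+}+b^{+}$, and then the composition rule for the functional calculus shows that $(a+b)\boxtimes c=\sqrt{(a^{+}+b^{+})c^{+}}$ and $(a\boxtimes c)\boxplus(b\boxtimes c)=\sqrt{(\sqrt{a^{+}c^{+}})^{2}+(\sqrt{b^{+}c^{+}})^{2}}=\sqrt{a^{+}c^{+}+b^{+}c^{+}}$ are the values at $(a,b,c)$ of one and the same continuous positively homogeneous function, namely $(s,t,u)\mapsto\sqrt{s^{+}u^{+}+t^{+}u^{+}}$; hence the two elements coincide in $F^{\delta}$, and therefore in $F$.

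The step I expect to be the genuine obstacle is the inequality $(a+b)\boxtimes c\le(a\boxtimes c)\boxplus(b\boxtimes c)$. The opposite inequality admits a self-contained order-theoretic proof: for $\phi\in(0,2^{-1}\pi)$ and $\theta\in(0,\infty)$, combining the bounds $2(a\boxtimes c)\le\tfrac{\theta}{\cos\phi}a+\tfrac{\cos\phi}{\theta}c$ and $2(b\boxtimes c)\le\tfrac{\theta}{\sin\phi}b+\tfrac{\sin\phi}{\theta}c$ with the weights $\cos\phi$ and $\sin\phi$ yields $(\cos\phi)(a\boxtimes c)+(\sin\phi)(b\boxtimes c)\le 2^{-1}\big(\theta(a+b)+\theta^{-1}c\big)$, while the endpoints $\phi\in\{0,2^{-1}\pi\}$ follow from $a\boxtimes c,b\boxtimes c\le(a+b)\boxtimes c$ (a comparison of infima); taking the supremum over $\phi$ and then the infimum over $\theta$ gives $(a\boxtimes c)\boxplus(b\boxtimes c)\le(a+b)\boxtimes c$. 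For the remaining inequality no such manipulation works inside $F$, because the optimal parameter $\theta$ (equivalently $\phi$) varies "coordinatewise" and is not attained by any single scalar; this is precisely why I route that direction through the completion $F^{\delta}$ and its functional calculus rather than attempting a direct argument in $F$.
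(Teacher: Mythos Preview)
The paper under review does not prove this proposition; it is quoted as Lemma~5.1 of the cited Azouzi--Boulabiar--Buskes paper and used as a black box, so there is no in-paper proof to compare your attempt against. Your argument is correct. The symmetry reductions and the homogeneity computation in (2) are routine and accurate, and for (1) the passage to the Dedekind completion $F^{\delta}$ together with the Krivine functional calculus for continuous positively homogeneous functions is a standard and valid route: the identifications of $\boxtimes$ and $\boxplus$ with the functional-calculus expressions $\sqrt{uv}$ and $\sqrt{u^{2}+v^{2}}$ on the positive cone are well known in that setting, and the composition rule then delivers the identity. Your self-contained order-theoretic derivation of the inequality $(a\boxtimes c)\boxplus(b\boxtimes c)\le(a+b)\boxtimes c$ is also correct, though redundant once the functional-calculus argument has established the full equality.
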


\section{Vector Semi-Norms Via Vector Semi-Inner Products}\label{S:TTI}
    We construct vector semi-norms from vector semi-inner products in this section and present our results. To begin, we consider the following definitions. 
    
    \begin{definition}\label{D:VSN}
    Given a vector space $V$ and an ordered vector space $F$, a map $\|\cdot\|\colon V\to F$ is called a \textit{vector seminorm} (see \cite[Section 2]{BusDev}) if
        
        (Positivity)\ $\|x\|\in F^+\quad (x\in V)$,
        
        (Absolute Homogeneity)\ $\| \alpha x\| = |\alpha|\|x\|\quad (x\in V, \alpha\in\mathbb{R})$, and
        
        (Triangle Inequality)\ $\|x+y\|\leq\|x\|+\|y\|\quad (x,y\in V)$.
    \end{definition}
    
    \begin{definition}\label{D:Norm}
        Let $V$ be a vector space, $F$ be a geometric mean closed vector lattice, and suppose that $T\colon V\times V \to F$ is a vector semi-inner product. Put $u \in F^+$. Define 
        \[
            \|x\|^T_u := T(x,x) \boxtimes u \quad (x \in V).
        \]
    \end{definition}
    
    We prove next that the maps defined in Definition~\ref{D:Norm} above are in fact vector seminorms.
    
    \begin{theorem}\label{T:TisaVSN}
    If $V$ is a vector space, $F$ is a geometric mean closed vector lattice, $u \in F^+$, and $T\colon V\times V \to F$ is a vector semi-inner product, then $\|\cdot\|^T_u$ is a vector seminorm.
    \end{theorem}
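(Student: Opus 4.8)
The plan is to verify the three defining properties of a vector seminorm from Definition~\ref{D:VSN} in turn. Positivity and absolute homogeneity are quick, and the triangle inequality is the substantive part. For positivity: by Definition~\ref{D:SIP}(v) we have $T(x,x)\in F^+$, and since also $u\in F^+$, the element $T(x,x)\boxtimes u=2^{-1}\inf\{\theta T(x,x)+\theta^{-1}u:\theta\in(0,\infty)\}$ is a greatest lower bound of a subset of $F^+$, hence lies in $F^+$. For absolute homogeneity, apply Definition~\ref{D:SIP}(iii) twice to obtain $T(\alpha x,\alpha x)=\alpha^2 T(x,x)$ for $x\in V$, $\alpha\in\mathbb{R}$; since $\alpha^2\in\mathbb{R}^+$, Proposition~\ref{P:AzBoBus}(2) yields
\[
\|\alpha x\|^T_u=\bigl(\alpha^2 T(x,x)\bigr)\boxtimes u=(\alpha^2)^{1/2}\bigl(T(x,x)\boxtimes u\bigr)=|\alpha|\,\|x\|^T_u.
\]

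For the triangle inequality I would set $a:=T(x,x)$ and $b:=T(y,y)$. Biadditivity and symmetry (Definition~\ref{D:SIP}(i),(ii),(iv)) give $T(x+y,x+y)=a+2T(x,y)+b$, and the Cauchy--Schwarz inequality (Theorem~\ref{T:CSI}(3)) gives $T(x,y)\le|T(x,y)|\le a\boxtimes b$, whence $T(x+y,x+y)\le a+2(a\boxtimes b)+b$. The map $\boxtimes$ is monotone in its first argument on $F^+$ (immediate from the definition, since $\theta a'+\theta^{-1}u\le\theta a''+\theta^{-1}u$ whenever $0\le a'\le a''$ and $\theta>0$), so $\|x+y\|^T_u=T(x+y,x+y)\boxtimes u\le\bigl(a+2(a\boxtimes b)+b\bigr)\boxtimes u$, and it suffices to prove the identity
\[
\bigl(a+2(a\boxtimes b)+b\bigr)\boxtimes u=(a\boxtimes u)+(b\boxtimes u).
\]

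To establish this, I would rewrite both sides as $2^{-1}$ times an infimum over $(0,\infty)^2$. Using that multiplication by a positive scalar, translation by a fixed vector, and addition of two such infima all commute with infima in an ordered vector space, one gets
\[
(a\boxtimes u)+(b\boxtimes u)=2^{-1}\inf\{sa+tb+(s^{-1}+t^{-1})u:s,t\in(0,\infty)\},
\]
and, after substituting $2(a\boxtimes b)=\inf\{ra+r^{-1}b:r\in(0,\infty)\}$,
\[
\bigl(a+2(a\boxtimes b)+b\bigr)\boxtimes u=2^{-1}\inf\bigl\{\theta(1+r)a+\theta(1+r^{-1})b+\theta^{-1}u:\theta,r\in(0,\infty)\bigr\}.
\]
The two indexing families of vectors then coincide as sets, via the change of variables $s=\theta(1+r)$, $t=\theta(1+r^{-1})$ with inverse $\theta=st/(s+t)$, $r=s/t$, under which $s^{-1}+t^{-1}=\theta^{-1}$; hence the infima agree, which completes the proof once combined with the monotonicity step above.

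The main obstacle is exactly this last identity. The bookkeeping with the change of variables is elementary, but care is needed to justify that every infimum invoked actually exists (they do, by geometric mean closedness of $F$ together with positivity of the elements involved) and that pulling scalars, translations, and sums in and out of infima is legitimate; these facts are routine in Archimedean vector lattices but should be stated cleanly rather than used tacitly.
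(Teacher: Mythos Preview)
Your proof is correct and follows essentially the same route as the paper: Cauchy--Schwarz followed by the change of variables $s=\theta(1+r)$, $t=\theta(1+r^{-1})$ (equivalently $\theta=(s^{-1}+t^{-1})^{-1}$, $r=s/t$) to compare the two double-indexed families. Two minor differences are worth noting: your absolute homogeneity argument via Proposition~\ref{P:AzBoBus}(2) is cleaner than the paper's direct manipulation of the defining infimum, and you observe that the change of variables is actually a bijection, yielding the identity $\bigl(a+2(a\boxtimes b)+b\bigr)\boxtimes u=(a\boxtimes u)+(b\boxtimes u)$, whereas the paper only records the one-sided set inclusion needed for the inequality.
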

    
    \begin{proof} Let $V$ be a vector space and $F$ be a geometric mean closed vector lattice. Suppose that $T\colon V\times V \to F$ is a vector semi-inner product and that $u \in F^+$. The positivity of $\|\cdot\|$ is evident. We first prove the absolute homogeneity of $\|\cdot\|^T_u$. To this end, put $\alpha \in \mathbb{R}$ and $x\in V$. The desired result $||\alpha x||_u^{T}=|\alpha|||x||_u^{T}$ is trivial in the case that $\alpha=0$. Suppose $\alpha \ne 0$. We want to show that 
         \[
        \| \alpha x\|^T_u = |\alpha|\|x\|^T_u,
        \]
         or equivalently, that
         \[
            \underset{\theta > 0}{\inf}\{\theta\la\alpha x, \alpha x\ra + \theta^{-1}u\} = |\alpha|\underset{\theta > 0}{\inf}\{\theta\la x,x\ra  +\theta^{-1}u\}.
        \]
        For this task, observe that 
        \setlength{\abovedisplayskip}{5pt}
        \setlength{\belowdisplayskip}{15pt}
        \setlength{\abovedisplayshortskip}{0pt}
        \setlength{\belowdisplayshortskip}{0pt}
        \begin{align*}
            & \underset{\theta > 0}{\inf}\{\theta\la\alpha x, \alpha x\ra + \theta^{-1}u\}= \underset{\theta > 0}{\inf}\{\theta |\alpha|^2\la x,  x\ra + \theta^{-1}u\}.
        \end{align*}
        Then we have
        \setlength{\abovedisplayskip}{5pt}
        \setlength{\belowdisplayskip}{15pt}
        \setlength{\abovedisplayshortskip}{0pt}
        \setlength{\belowdisplayshortskip}{0pt}
        \begin{align*}
            \underset{\theta > 0}{\inf}\{\theta |\alpha|^2\la x,  x\ra + \theta^{-1}u\} = |\alpha |\underset{\theta > 0}{\inf}\{\theta | \alpha |\la x,  x\ra + (\theta |\alpha |)^{-1}u\}.
        \end{align*}
        Substituting $t:=\theta | \alpha |$, we get
        \[
            |\alpha |\underset{\theta > 0}{\inf}\{\theta | \alpha |\la x,  x\ra + (\theta |\alpha |)^{-1}u\} = | \alpha |\underset{t > 0}{\inf}\{t\la x,x\ra + t^{-1}u\},
        \]
        as desired. 
  
We proceed by showing that $\|\cdot\|^T_u$ satisfies the triangle inequality. For this purpose, let $x,y\in V$. We will show that $2\|x\|^T_u + 2\|y\|^T_u - 2\|x+y\|^T_u \geq 0$.
        To this end, observe that
        \setlength{\abovedisplayskip}{5pt}
        \setlength{\belowdisplayskip}{15pt}
        \setlength{\abovedisplayshortskip}{0pt}
        \setlength{\belowdisplayshortskip}{0pt}
        \begin{align*}
            & 2\|x\|^T_u + 2\|y\|^T_u - 2\|x+y\|^T_u \\
            & = \underset{\theta > 0}{\inf}\{\theta\la x,x\ra  + \theta^{-1}u\} + \underset{\psi > 0}{\inf}\{\psi\la y,y \ra + \psi^{-1}u\} - \underset{\lambda > 0}{\inf}\{\lambda\la x+y,x+y\ra + \lambda^{-1}u \} \\
            & = \underset{\theta, \psi > 0}{\inf}\{\theta \la x, x\ra + \theta^{-1}u + \psi \la y,y\ra + \psi^{-1}u\}\\
            &\qquad\qquad\qquad- \underset{\lambda > 0}{\inf}\{\lambda\la x,x\ra + 2\lambda\la x,y\ra + \lambda\la y,y\ra + \lambda^{-1}u\} \\
            & \geq \underset{\theta, \psi > 0}{\inf}\{\theta \la x, x\ra + \theta^{-1}u + \psi \la y,y\ra + \psi^{-1}u\}\\
            &\qquad\qquad\qquad- \underset{\lambda > 0}{\inf}\{\lambda\la x,x\ra + 2\lambda | \la x,y\ra | + \lambda\la y,y\ra + \lambda^{-1}u\}.
        \end{align*}
        Using Theorem~\ref{T:CSI}(3), we obtain 
        \begin{align*}
            & \underset{\theta, \psi > 0}{\inf}\{\theta \la x, x\ra + \theta^{-1}u + \psi \la y,y\ra + \psi^{-1}u\} - \underset{\lambda > 0}{\inf}\{\lambda\la x,x\ra + 2\lambda | \la x,y\ra | + \lambda\la y,y\ra + \lambda^{-1}u\} \\
            & \geq \underset{\theta, \psi > 0}{\inf}\{\theta \la x, x\ra + \theta^{-1}u + \psi \la y,y\ra + \psi^{-1}u\}\\
            &\qquad\qquad\qquad- \underset{\lambda > 0}{\inf}\{\lambda\la x,x\ra + 2\lambda (T(x,x) \boxtimes T(y,y)) + \lambda\la y,y\ra + \lambda^{-1}u\} \\
            & = \underset{\theta, \psi > 0}{\inf}\{\theta \la x, x\ra  + \psi \la y,y\ra + (\theta^{-1} + \psi^{-1})u\}\\
            &\qquad\qquad\qquad - \underset{\lambda, \phi > 0}{\inf}\{\lambda(1 + \phi)\la x,x\ra + \lambda(1 + \phi^{-1})\la y,y\ra + \lambda^{-1}u\}.
        \end{align*}
        To show that the above difference is an element of $F^{+}$, it suffices to show the inclusion
        \setlength{\abovedisplayskip}{5pt}
        \setlength{\belowdisplayskip}{15pt}
        \setlength{\abovedisplayshortskip}{0pt}
        \setlength{\belowdisplayshortskip}{0pt}
        \begin{align*}
            &\{\theta\la x,x\ra + \psi\la y,y\ra + (\theta^{-1} + \psi^{-1})u : \theta, \psi \in (0, \infty)\} \\
            & \subseteq \{\lambda(1 + \phi)\la x,x\ra + \lambda(1 + \phi^{-1})\la y,y\ra + \lambda^{-1}u : \lambda, \phi \in (0, \infty)\}.
        \end{align*}
        and use the readily-checked monotonicity of infima: $\inf{A}\ge\inf{B}$ whenever $A\subseteq B\subseteq F$ and $\inf{A}$ and $\inf{B}$ exist. To this end, let $\theta, \psi > 0$. Set $\lambda := (\theta^{-1} + \psi^{-1})^{-1}$ and $\phi := \theta\psi^{-1}$, and note that $\lambda$, $\phi > 0$. Observe that $\lambda(1 + \phi) = \theta$ and $\lambda(1 + \phi^{-1}) = \psi$. Therefore, we obtain the inclusion above. 
    \end{proof}
    
    In the theorem above, the triangle inequality can be sharpened in the case that $F$ is a semi-prime, square root closed $f$-algebra. A semiprime $f$-algebra $F$ is called \textit{square root closed} if for every $a\in F^+$ there exists a (unique) $b\in F^+$ for which $b=\sqrt{a}$.
    
\begin{theorem}[\textbf{Sharpened Triangle Inequality}]\label{T:STI} Let $V$ be a vector space, and suppose $F$ is a square root closed, semi-prime $f$-algebra. Let $T: V \times V \rightarrow F$  be a vector semi-inner product, and put $u \in F^+$. If $x, y \in V$, then 
\begin{align*}
||x + y||^T_u\leq\sqrt{(||x||^T_u+||y||^T_u)^2-\underset{\lambda\in\mathbb{R}\setminus\{0\}}{\inf}\{|\lambda|^{-1}(||\lambda x-y||^T_u)^2\}}\leq||x||^T_u+||y||^T_u.
\end{align*}
Moreover, the equality
\[
||x + y||^T_u=\sqrt{(||x||^T_u+||y||^T_u)^2-\underset{\lambda\in\mathbb{R}\setminus\{0\}}{\inf}\{|\lambda|^{-1}(||\lambda x-y||^T_u)^2\}}
\]
holds if and only if $T(x,y)u\in F^+$.
\end{theorem}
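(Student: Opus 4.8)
The plan is to reduce the statement to the arithmetic of the square root in the $f$-algebra $F$. Recall that a square root closed semiprime $f$-algebra is geometric mean closed and satisfies $p\boxtimes q=\sqrt{pq}$ for all $p,q\in F^{+}$ (see \cite{AzBoBus}), so that $\|v\|^{T}_{u}=\sqrt{T(v,v)\,u}$ for each $v\in V$. Abbreviate $a:=T(x,x)$, $b:=T(y,y)$, and $c:=T(x,y)=T(y,x)$, where $a,b\in F^{+}$. Bilinearity and symmetry of $T$ give $T(x+y,x+y)=a+2c+b$, hence $\|x+y\|^{T}_{u}=\sqrt{(a+2c+b)u}$. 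Using that the $f$-algebra square root is multiplicative and positively homogeneous on $F^{+}$ and that $\sqrt{u^{2}}=|u|=u$, one finds $(\|x\|^{T}_{u}+\|y\|^{T}_{u})^{2}=(\sqrt{au}+\sqrt{bu})^{2}=au+bu+2\sqrt{(au)(bu)}=(a+b+2\sqrt{ab})u$.

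Next I would evaluate $m:=\inf_{\lambda\in\mathbb{R}\setminus\{0\}}\{|\lambda|^{-1}(\|\lambda x-y\|^{T}_{u})^{2}\}=\inf_{\lambda\in\mathbb{R}\setminus\{0\}}\{|\lambda|^{-1}T(\lambda x-y,\lambda x-y)\,u\}$. Splitting the index set into $\lambda>0$ and $\lambda=-\mu<0$ and using $T(\lambda x-y,\lambda x-y)=\lambda^{2}a-2\lambda c+b$, the terms are $(\lambda a+\lambda^{-1}b)u-2cu$ for $\lambda>0$ and $(\mu a+\mu^{-1}b)u+2cu$ for $\mu>0$. Since $(\theta a+\theta^{-1}b)u=\theta(au)+\theta^{-1}(bu)$, the set $\{(\theta a+\theta^{-1}b)u:\theta>0\}$ has infimum $2\bigl((au)\boxtimes(bu)\bigr)=2\sqrt{(au)(bu)}=2\sqrt{ab}\,u$; translating by the constants $\mp 2cu$ and taking the meet of the two resulting infima (and using $t\wedge(-t)=-|t|$ together with $|cu|=|c|\,u$), we get that $m$ exists and $m=2\sqrt{ab}\,u-2|c|\,u$. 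Consequently $(\|x\|^{T}_{u}+\|y\|^{T}_{u})^{2}-m=(a+b+2|c|)u\in F^{+}$, so the middle term of the asserted chain equals $\sqrt{(a+b+2|c|)u}$.

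It then remains to compare $\sqrt{(a+2c+b)u}$, $\sqrt{(a+b+2|c|)u}$ and $\sqrt{(a+b+2\sqrt{ab})u}=\sqrt{(\sqrt{au}+\sqrt{bu})^{2}}=\|x\|^{T}_{u}+\|y\|^{T}_{u}$. Since $c\le|c|$ and $u\ge 0$ we have $(a+2c+b)u\le(a+b+2|c|)u$, and since the Cauchy--Schwarz inequality of Theorem~\ref{T:CSI}(3) gives $|c|=|T(x,y)|\le T(x,x)\boxtimes T(y,y)=\sqrt{ab}$, again multiplied by $u\ge 0$, we have $(a+b+2|c|)u\le(a+b+2\sqrt{ab})u$; applying monotonicity of the square root on $F^{+}$ yields both inequalities in the statement. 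For the equality clause, injectivity of the square root on $F^{+}$ shows that $\|x+y\|^{T}_{u}=\sqrt{(a+b+2|c|)u}$ is equivalent to $(a+2c+b)u=(a+b+2|c|)u$, i.e.\ to $cu=|c|u$; as $|c|u=|cu|$ (because $u\ge 0$), this says $cu=|cu|$, which holds precisely when $cu=T(x,y)u\in F^{+}$.

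The background facts invoked — multiplicativity, positive homogeneity, monotonicity and injectivity of the square root on $F^{+}$, the identity $|cu|=|c||u|$ in an Archimedean $f$-algebra, and the identification $p\boxtimes q=\sqrt{pq}$ in a square root closed semiprime $f$-algebra — are standard. The step requiring the most care is the evaluation of $m$: one must rewrite each term as a translate of $\theta(au)+\theta^{-1}(bu)$ so that geometric mean closedness guarantees the relevant infima exist, handle the sign split in $\lambda$ cleanly, and note where the semiprime hypothesis enters (uniqueness and monotonicity of roots). The rest is bookkeeping with the identities above.
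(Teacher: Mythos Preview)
Your proof is correct and follows essentially the same route as the paper's: both identify the middle expression as $\sqrt{(a+b+2|c|)u}$ (with $a=T(x,x)$, $b=T(y,y)$, $c=T(x,y)$) and then compare the three radicands using $c\le|c|$ and the Cauchy--Schwarz bound $|c|\le\sqrt{ab}$, finishing with monotonicity and injectivity of the square root on $F^{+}$. The only organizational difference is that you evaluate the infimum $m$ directly by splitting on the sign of $\lambda$ and using geometric mean closedness of $F$, whereas the paper invokes the Cauchy--Schwarz equality of Theorem~\ref{T:CSI}(2) together with order continuity of $f$-algebra multiplication by $u$ to reach the same formula.
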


\begin{proof}
Let $x,y\in V$. By \cite[Proposition 2.1]{BusSch3}, which is a slight generalization of \cite[Theorem 2.21]{Az},
\[
a\boxtimes b=\sqrt{ab}\quad (a,b\in F^+)
\]
holds, where the calculation $a\boxtimes b$ is taken in the geometric mean completion of $F$ (see \cite{BusSch}), where it is guaranteed to exist. But since $F$ is square root closed, we conclude that $a\boxtimes b\in F$, i.e. $F$ is itself geometric mean closed. It then follows from \cite[Theorem 142.3(ii)]{Zan2} that
\[
(\|z\|^T_u)^2=(\sqrt{T(z,z)u})^2=T(z,z)u
\]
holds for all $z\in V$. Using this fact, the bilinearity of $T$, Theorem~\ref{T:CSI}(2), and the order continuity and commutativity of Archimedean $f$-algebra multiplication, we obtain
\begin{align*}
&(\|x+y\|^T_u)^2=(\|x\|^T_u)^2+(\|y\|^T_u)^2+2T(x,y)u\\
&\leq(\|x\|^T_u)^2+(\|y\|^T_u)^2+2|T(x,y)|u\\
&=(\|x\|^T_u)^2+(\|y\|^T_u)^2+2\left(\sqrt{T(x,x)T(y,y)}-2^{-1}\underset{\lambda\in\mathbb{R}\setminus\{0\}}{\inf}\{|\lambda|^{-1}T(\lambda x-y,\lambda x-y)\}\right)u\\
&=(\|x\|^T_u)^2+(\|y\|^T_u)^2+2\|x\|^T_u\|y\|^T_u-\underset{\lambda\in\mathbb{R}\setminus\{0\}}{\inf}\{|\lambda|^{-1}(\|\lambda x-y\|^T_u)^2\}\\
&=(\|x\|^T_u+\|y\|^T_u)^2-\underset{\lambda\in\mathbb{R}\setminus\{0\}}{\inf}\{|\lambda|^{-1}(\|\lambda x-y\|^T_u)^2\}.
\end{align*}
Invoking \cite[Theorem 142.3(ii)]{Zan2} once more, we obtain
\[
\|x+y\|^T_u\leq\sqrt{(\|x\|^T_u+\|y\|^T_u)^2-\underset{\lambda\in\mathbb{R}\setminus\{0\}}{\inf}\{|\lambda|^{-1}(\|\lambda x-y\|^T_u)^2\}}.
\]
That the equality condition holds if and only if $T(x,y)u\in F^+$ is evident from the string of relations above. Furthermore, the inequality
\[
\sqrt{(\|x\|^T_u+\|y\|^T_u)^2-\underset{\lambda\in\mathbb{R}\setminus\{0\}}{\inf}\{|\lambda|^{-1}(\|\lambda x-y\|^T_u)^2\}}\leq\|x\|^T_u+\|y\|^T_u
\]
also follows from \cite[Theorem 142.3(ii)]{Zan2}.
\end{proof}

If $F$ is instead a geometric mean closed semi-prime $f$-algebra (and not necessarily square root closed), then the proof of Theorem~\ref{T:STI} also verifies the following reformulation of our sharpened triangle inequality.

\begin{theorem}[\textbf{Sharpened Triangle Inequality}]\label{T:STIv2}
Let $V$ be a vector space, and suppose $F$ is a geometric mean closed, semi-prime $f$-algebra. Let $T: V \times V \rightarrow F$  be a vector semi-inner product, and put $u \in F^+$. If $x, y \in V$, then 
\[
(||x + y||^T_u)^2\leq(||x||^T_u+||y||^T_u)^2-\underset{\lambda\in\mathbb{R}\setminus\{0\}}{\inf}\{|\lambda|^{-1}(||\lambda x-y||^T_u)^2\}\leq(||x||^T_u+||y||^T_u)^2.
\]
Moreover, the equality
\[
(||x + y||^T_u)^2=(||x||^T_u+||y||^T_u)^2-\underset{\lambda\in\mathbb{R}\setminus\{0\}}{\inf}\{|\lambda|^{-1}(||\lambda x-y||^T_u)^2\}
\]
holds if and only if $T(x,y)u\in F^+$.
\end{theorem}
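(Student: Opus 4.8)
The plan is to recycle the proof of Theorem~\ref{T:STI} almost verbatim, with the single modification that the whole argument is run one level down, in terms of the \emph{squared} seminorms $(\|\cdot\|^T_u)^2$ rather than the seminorms themselves. In that proof square root closedness of $F$ is used only (a) to guarantee $a\boxtimes b=\sqrt{ab}\in F$ and (b) to pass between $c^2\le d^2$ and $c\le d$ for $c,d\in F^+$ via \cite[Theorem~142.3(ii)]{Zan2}. Obstacle (b) evaporates once we never leave the squared level; obstacle (a) is absorbed by the weaker hypothesis together with semiprimeness, as follows.

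First I would record the two facts about $\boxtimes$ that remain available. Since $F$ is assumed geometric mean closed, $a\boxtimes b\in F$ for all $a,b\in F^{+}$; and by \cite[Proposition~2.1]{BusSch3} the identity $a\boxtimes b=\sqrt{ab}$ holds in the geometric mean completion of $F$ (see \cite{BusSch}), into which $F$ embeds as a subalgebra, so squaring it yields the relation $(a\boxtimes b)^2=ab$ between elements of $F$. In particular $(\|z\|^T_u)^2=(T(z,z)\boxtimes u)^2=T(z,z)u$ for every $z\in V$. Next I would set up two rewritings. On the one hand, $\bigl(T(x,x)\boxtimes T(y,y)\bigr)u$ and $\|x\|^T_u\|y\|^T_u$ are both positive and have the common square $T(x,x)T(y,y)u^2$ (using $(a\boxtimes b)^2=ab$ and the commutativity of the multiplication), hence coincide by uniqueness of positive square roots in the semiprime $f$-algebra $F$ (\cite[Proposition~2(ii)]{BeuHui}). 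On the other hand, since $(\|\lambda x-y\|^T_u)^2=T(\lambda x-y,\lambda x-y)u$ and multiplication by the fixed element $u\in F^{+}$ is order continuous on the Archimedean $f$-algebra $F$,
\[
\underset{\lambda\in\mathbb{R}\setminus\{0\}}{\inf}\bigl\{|\lambda|^{-1}(\|\lambda x-y\|^T_u)^2\bigr\}=\Bigl(\underset{\lambda\in\mathbb{R}\setminus\{0\}}{\inf}\bigl\{|\lambda|^{-1}T(\lambda x-y,\lambda x-y)\bigr\}\Bigr)u,
\]
the infimum on the right existing by Theorem~\ref{T:CSI}(1); moreover this element lies in $F^{+}$, because the infimum is $\ge 0$ ($0$ being a lower bound of that set by Definition~\ref{D:SIP}(v)) and multiplication by $u$ preserves $F^{+}$.

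With these preparations the computation from the proof of Theorem~\ref{T:STI} goes through unchanged: expanding $(\|x+y\|^T_u)^2=T(x+y,x+y)u$ by the biadditivity of $T$, bounding $T(x,y)u\le|T(x,y)|u$ (multiplication by $u\in F^{+}$ is positive), substituting Theorem~\ref{T:CSI}(2), and applying the two rewritings gives $(\|x+y\|^T_u)^2\le(\|x\|^T_u+\|y\|^T_u)^2-\underset{\lambda\in\mathbb{R}\setminus\{0\}}{\inf}\{|\lambda|^{-1}(\|\lambda x-y\|^T_u)^2\}$, and the remaining inequality is immediate since the subtracted term lies in $F^{+}$. For the equality clause, this chain contains exactly one inequality, namely $2T(x,y)u\le 2|T(x,y)|u$, so equality holds precisely when $T(x,y)u=|T(x,y)|u$; and since multiplication by $u\in F^{+}$ preserves finite suprema in an $f$-algebra, $|T(x,y)|u=\bigl(T(x,y)\vee(-T(x,y))\bigr)u=(T(x,y)u)\vee\bigl(-(T(x,y)u)\bigr)=|T(x,y)u|$, whence $T(x,y)u=|T(x,y)|u$ if and only if $T(x,y)u=|T(x,y)u|$, i.e.\ if and only if $T(x,y)u\in F^{+}$.

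The step I expect to need the most care is exactly the one that forces the detour through the completion: without a square root map on $F$ one cannot simply write $\|x\|^T_u\|y\|^T_u=\sqrt{T(x,x)u}\,\sqrt{T(y,y)u}=\sqrt{T(x,x)T(y,y)}\,u=\bigl(T(x,x)\boxtimes T(y,y)\bigr)u$ as in Theorem~\ref{T:STI}; instead one must verify $(a\boxtimes b)^2=ab$ in the geometric mean completion and then descend to $F$ using semiprimeness. Once that identification and the order-continuity rewriting of the infimum are in place, everything else is formal bookkeeping copied from Theorem~\ref{T:STI}.
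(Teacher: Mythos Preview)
Your proposal is correct and follows exactly the paper's approach: the paper simply declares that the proof of Theorem~\ref{T:STI} already verifies Theorem~\ref{T:STIv2}. You have made explicit precisely the points that need checking when square root closedness is dropped---namely the identity $(a\boxtimes b)^2=ab$ via the geometric mean completion, the identification $\bigl(T(x,x)\boxtimes T(y,y)\bigr)u=\|x\|^T_u\|y\|^T_u$ via uniqueness of positive square roots in a semiprime $f$-algebra, and the order-continuity rewriting of the infimum---and your treatment of the equality clause is more detailed than the paper's one-line remark.
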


As an immediate corollary, we obtain the following characterization for when the norm $\|\cdot\|^T_u$ is additive: $||x+y||^{T}_{u} = ||x||^{T}_{u} + ||y||^{T}_{u}$.

\begin{corollary} Let $V$ be a vector space, and suppose $F$ is a geometric mean closed semi-prime $f$-algebra. Let $T: V \times V \rightarrow F$  be a vector semi-inner product, and put $u \in F^+$. If $x, y \in V$, then 
\begin{align*}
||x + y||^T_u=||x||^T_u+||y||^T_u
\end{align*}
holds if and only if $T(x,y)u\in F^+$ and $\underset{\lambda\in\mathbb{R}\setminus\{0\}}{\inf}\{|\lambda|^{-1}(\|\lambda x-y\|^T_u)^2\}=0$.
\end{corollary}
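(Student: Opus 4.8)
The plan is to deduce the corollary directly from the reformulated sharpened triangle inequality, Theorem~\ref{T:STIv2}, by squaring both sides of the candidate equality. First I would record that both $\|x+y\|^T_u$ and $\|x\|^T_u+\|y\|^T_u$ lie in $F^+$: the former by the positivity clause of Theorem~\ref{T:TisaVSN}, the latter as a sum of two such elements. Since $F$ is a semiprime $f$-algebra, positive square roots are unique (this is \cite[Proposition 2(ii)]{BeuHui}, recalled in the preliminaries), so for $a,b\in F^+$ one has $a=b$ if and only if $a^2=b^2$: the forward implication is trivial, and for the converse $a$ and $b$ are both the unique positive square root of $c:=a^2=b^2\in F^+$. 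Applying this with $a=\|x+y\|^T_u$ and $b=\|x\|^T_u+\|y\|^T_u$, the equality $\|x+y\|^T_u=\|x\|^T_u+\|y\|^T_u$ is equivalent to $(\|x+y\|^T_u)^2=(\|x\|^T_u+\|y\|^T_u)^2$.

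Next I would introduce the abbreviations $A:=(\|x+y\|^T_u)^2$, $C:=(\|x\|^T_u+\|y\|^T_u)^2$, and $d:=\inf_{\lambda\in\mathbb{R}\setminus\{0\}}\{|\lambda|^{-1}(\|\lambda x-y\|^T_u)^2\}$, so that Theorem~\ref{T:STIv2} reads $A\le C-d\le C$. From $A\le C-d\le C$ it follows that $A=C$ holds if and only if both $A=C-d$ and $C-d=C$ hold (indeed $A=C$ forces $C\le C-d$, hence $d\le 0$, while $d\ge 0$ from $C-d\le C$, so $d=0$). By the ``moreover'' part of Theorem~\ref{T:STIv2}, the equality $A=C-d$ is equivalent to $T(x,y)u\in F^+$; and $C-d=C$ is equivalent to $d=0$ by ordinary cancellation in the vector space $F$. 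Combining these equivalences yields exactly the claimed characterization.

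I expect essentially no obstacle beyond the squaring step, and even that is routine given the uniqueness of positive square roots in semiprime $f$-algebras; the corollary is genuinely an immediate consequence of Theorem~\ref{T:STIv2} once one observes that equality of positive elements of $F$ is detected by equality of their squares. The only point that warrants a sentence of care is confirming that all quantities involved --- in particular $\|z\|^T_u=T(z,z)\boxtimes u$, and hence $C$ and $d$ --- actually lie in $F$ rather than merely in its geometric mean completion, which is guaranteed here by the standing hypothesis that $F$ is geometric mean closed.
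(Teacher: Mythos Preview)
Your proposal is correct and is exactly what the paper intends: it states the result as an ``immediate corollary'' of Theorem~\ref{T:STIv2} without writing out any argument, and your squaring reduction together with the chain $A\le C-d\le C$ is precisely how one reads it off. The only detail you spelled out that the paper leaves implicit is the use of uniqueness of positive square roots in a semiprime $f$-algebra to pass between $\|x+y\|^T_u=\|x\|^T_u+\|y\|^T_u$ and the squared version, and that is handled correctly.
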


Next we utilize the square mean operation to extend the classical Pythagorean theorem for inner product spaces to a result that holds for vector seminorms that are built from vector semi-inner products with geometric mean closed codomain. Indeed, note that the conclusion of the classical Pythagorean theorem (see e.g. \cite[Theorem~2.2]{Con})
\[
\|x+y\|^2=\|x\|^2+\|y\|^2
\]
is equivalent to
\[
\|x+y\|=\|x\|\boxplus\|y\|.
\]

\begin{theorem}\label{T:TTI} \textnormal{\textbf{(The Pythagorean Theorem)}}
        Let $V$ be a vector space, and let $F$ be a geometric mean closed vector lattice. Let $T: V \times V \rightarrow F$  be a vector semi-inner product. Let $u \in F^+$. If $x, y \in V$ are such that $T(x,y) = 0$, then 
            \[
                ||x + y||^T_u = ||x||^T_u \boxplus ||y||^T_u.
            \]
    \end{theorem}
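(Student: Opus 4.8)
The plan is to reduce the claim to the biadditivity of the geometric mean operation $\boxtimes$ stated in Proposition~\ref{P:AzBoBus}. First I would use the bilinearity and symmetry of $T$ (Definition~\ref{D:SIP}(i)--(iv)) to expand
\[
T(x+y,x+y)=T(x,x)+2T(x,y)+T(y,y),
\]
and then invoke the hypothesis $T(x,y)=0$ to conclude $T(x+y,x+y)=T(x,x)+T(y,y)$.

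Next I would substitute this into the definition of $\|\cdot\|^T_u$ (Definition~\ref{D:Norm}), obtaining
\[
\|x+y\|^T_u=T(x+y,x+y)\boxtimes u=\bigl(T(x,x)+T(y,y)\bigr)\boxtimes u.
\]
Here I note that $T(x,x),T(y,y)\in F^+$ by Definition~\ref{D:SIP}(v) and $u\in F^+$ by hypothesis, so Proposition~\ref{P:AzBoBus}(1) applies directly and yields
\[
\bigl(T(x,x)+T(y,y)\bigr)\boxtimes u=\bigl(T(x,x)\boxtimes u\bigr)\boxplus\bigl(T(y,y)\boxtimes u\bigr)=\|x\|^T_u\boxplus\|y\|^T_u,
\]
which is the desired identity. (I would also remark that $\boxplus$ is well-defined on $F$ since every geometric mean closed vector lattice is square mean closed, as recalled in the preliminaries, and that $\|\cdot\|^T_u$ is genuinely a vector seminorm by Theorem~\ref{T:TisaVSN}, so the statement is meaningful.)

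I do not anticipate a serious obstacle: the entire argument is a short computation once the right lemma—biadditivity of $\boxtimes$—is identified, and the only thing to be careful about is checking that all three arguments fed into $\boxtimes$ lie in $F^+$ so that Proposition~\ref{P:AzBoBus} is applicable. If anything, the mild subtlety is purely bookkeeping: confirming that the cross term $2T(x,y)$ genuinely vanishes as an element of $F$ (not merely in absolute value), which is exactly what the hypothesis $T(x,y)=0$ provides, in contrast to the Cauchy--Schwarz-based estimates used in Theorems~\ref{T:STI} and~\ref{T:STIv2} where only $|T(x,y)|$ is controlled.
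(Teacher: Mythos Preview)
Your proposal is correct and follows essentially the same approach as the paper's proof: expand $T(x+y,x+y)$ by bilinearity, drop the cross term via the hypothesis $T(x,y)=0$, and then apply the biadditivity of $\boxtimes$ from Proposition~\ref{P:AzBoBus}(1). Your additional remarks about positivity and well-definedness are accurate and only make the argument more explicit.
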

    
    \begin{proof} Let $x, y \in V$ satisfy $T(x,y)=0$. Then using Proposition~\ref{P:AzBoBus}~(1) in the fourth equality below yields
    	\begin{align*}
    		 ||x + y||^T_u&=T(x+y,x+y)\boxtimes u\\
    		 &=\Bigl(T(x,x)+2T(x,y)+T(y,y)\Bigr)\boxtimes u\\
    		 &=\Bigl(T(x,x)+T(y,y)\Bigr)\boxtimes u\\
    		 &=\Bigl(T(x,x)\boxtimes u\Bigr)\boxplus\Bigl(T(y,y)\boxtimes u\Bigr)\\
    		 &=||x||^T_u \boxplus ||y||^T_u.
    	\end{align*}
    \end{proof}

In a manner similar to the Pythagorean theorem, we use the square mean operation on vector lattices to generalize the classical parallelogram law. For this task, first observe that the conclusion of the classical parallelogram law (see e.g. \cite[Theorem~2.3]{Con})
\[
\|x+y\|^2+\|x-y\|^2=2(\|x\|^2+\|y\|^2)
\]
is equivalent to
\[
||x + y||\boxplus||x - y||=2^{1/2}(||x||\boxplus||y||).
\]

\begin{theorem}\label{T:PL} \textnormal{\textbf{(The Parallelogram Law)}}
        Let V be a vector space, and let F be a geometric mean closed vector lattice.
    	Suppose $T$ is a vector semi-inner product, and let $x, y \in V$. Let $u \in F^{+}$. Then the following holds:
    	\[
        	||x + y||^T_u \boxplus ||x - y||^T_u = 2^{1/2}(||x||^T_u \boxplus ||y||^T_u).
        \]
    \end{theorem}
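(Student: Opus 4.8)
The plan is to mimic, almost verbatim, the computation that proves the Pythagorean theorem (Theorem~\ref{T:TTI}): expand the relevant seminorms through the bilinearity of $T$, collapse the resulting $\boxplus$ of two $\boxtimes$-expressions by biadditivity (Proposition~\ref{P:AzBoBus}(1)), and absorb the scalar $2^{1/2}$ via separate positive homogeneity (Proposition~\ref{P:AzBoBus}(2)). The key observation is that \emph{both} sides of the claimed identity can be rewritten as the single expression $\bigl(2T(x,x)+2T(y,y)\bigr)\boxtimes u$.

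Concretely, I would first use properties (i)--(iv) of Definition~\ref{D:SIP} to write $\|x+y\|^T_u=\bigl(T(x,x)+2T(x,y)+T(y,y)\bigr)\boxtimes u$ and $\|x-y\|^T_u=\bigl(T(x,x)-2T(x,y)+T(y,y)\bigr)\boxtimes u$. Since $T(x+y,x+y)$ and $T(x-y,x-y)$ lie in $F^+$ by property (v) and $u\in F^+$ by hypothesis, Proposition~\ref{P:AzBoBus}(1) applies and yields
\[
\|x+y\|^T_u\boxplus\|x-y\|^T_u=\bigl(T(x+y,x+y)+T(x-y,x-y)\bigr)\boxtimes u=\bigl(2T(x,x)+2T(y,y)\bigr)\boxtimes u,
\]
the second equality being the elementary cancellation of the cross terms $\pm 2T(x,y)$.

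For the right-hand side, I would again invoke Proposition~\ref{P:AzBoBus}(1) (with $T(x,x),T(y,y),u\in F^+$) to get $\|x\|^T_u\boxplus\|y\|^T_u=\bigl(T(x,x)\boxtimes u\bigr)\boxplus\bigl(T(y,y)\boxtimes u\bigr)=\bigl(T(x,x)+T(y,y)\bigr)\boxtimes u$, and then Proposition~\ref{P:AzBoBus}(2) with $\lambda=2$ gives $2^{1/2}\bigl(\|x\|^T_u\boxplus\|y\|^T_u\bigr)=\bigl(2(T(x,x)+T(y,y))\bigr)\boxtimes u=\bigl(2T(x,x)+2T(y,y)\bigr)\boxtimes u$. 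Comparing the two computations completes the proof. I do not expect a genuine obstacle here: the argument is purely formal and relies only on results already in hand. The sole points deserving a moment's care are checking that every argument handed to Proposition~\ref{P:AzBoBus} truly lies in $F^+$ (which is exactly what property (v) of a vector semi-inner product guarantees) and organizing the calculation so that the factor $2^{1/2}$ is handled through separate positive homogeneity of $\boxtimes$, rather than through positive homogeneity of $\boxplus$, which, although true and readily checked, is not recorded in the preliminaries.
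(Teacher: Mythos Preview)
Your proposal is correct and follows essentially the same route as the paper's proof: both arguments expand the seminorms via bilinearity of $T$, use biadditivity of $\boxtimes$ (Proposition~\ref{P:AzBoBus}(1)) to collapse the $\boxplus$ of two $\boxtimes$-terms into a single $\boxtimes$, and handle the scalar $2^{1/2}$ via separate positive homogeneity (Proposition~\ref{P:AzBoBus}(2)), with both computations passing through the common expression $\bigl(2T(x,x)+2T(y,y)\bigr)\boxtimes u$. The only cosmetic difference is that the paper chains equalities from the left-hand side to the right-hand side in one string, whereas you compute each side separately and compare.
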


\begin{proof}
Using Proposition~\ref{P:AzBoBus}~(1) in the third and sixth equalities below and Proposition~\ref{P:AzBoBus}~(2) in the fifth equality below, we obtain
\begin{align*}
    	&||x + y||^T_u \boxplus ||x - y||^T_u=\Bigl(T(x+y,x+y)\boxtimes u\Bigr)\boxplus\Bigl(T(x-y,x-y)\boxtimes u\Bigr)\\
    	&=\biggl(\Bigl(T(x,x)+2T(x,y)+T(y,y)\Bigr)\boxtimes u\biggr)\boxplus\biggl(\Bigl(T(x,x)-2T(x,y)+T(y,y)\Bigr)\boxtimes u\biggr)\\
    	&=\Bigl(T(x,x)+2T(x,y)+T(y,y)+T(x,x)-2T(x,y)+T(y,y)\Bigr)\boxtimes u\\
    	&=\Bigl(2T(x,x)+2T(y,y)\Bigr)\boxtimes u\\
    	&=2^{1/2}\biggl(\Bigl(T(x,x)+T(y,y)\Bigr)\boxtimes u\biggr)\\
    	&=2^{1/2}\biggl(\Bigl(T(x,x)\boxtimes u\Bigr)\boxplus\Bigl(T(y,y)\boxtimes u\Bigr)\biggr)\\
    	&=2^{1/2}(||x||^T_u \boxplus ||y||^T_u).
    \end{align*}
    \end{proof}

\bibliography{mybib}
\bibliographystyle{amsplain}

\end{document}